\newtheorem{theorem}{Theorem}[section]
\newtheorem{lemma}[theorem]{Lemma}
\newtheorem{corollary}[theorem]{Corollary}
\newtheorem{example}[theorem]{Example}
\renewcommand{\geq}{\geqslant}
\renewcommand{\leq}{\leqslant}
\renewcommand{\le}{\leqslant}
\renewcommand\emptyset{\varnothing}
\newcommand{\MOFS}{{\rm MOFS}}
\newcommand{\DK}{{\ensuremath{\rm DK}}}
\newcommand{\PBD}{{\ensuremath{\rm PBD}}}
\def \cB {{\mathcal B}}
\def \cD {{\mathcal D}}
\def \cF {{\mathcal F}}
\def \cK {{\mathcal K}}
\def \cP {{\mathcal P}}
\def \cV {{\mathcal V}}
\def\Z{\mathbb{Z}}
\def\eref#1{$(\ref{#1})$}
\def\egref#1{Example~$\ref{#1}$}
\def\lref#1{Lemma~$\ref{#1}$}
\def\tref#1{Theorem~$\ref{#1}$}
\def\cref#1{Conjecture~$\ref{#1}$}
\def\Cref#1{Corollary~$\ref{#1}$}
\title{Maximal sets of mutually orthogonal frequency squares and Doehlert-Klee designs}
\author{Carly Bodkin\thanks{School of Mathematics, Monash University, Clayton Vic 3800, Australia.
\texttt{carly.bodkin@monash.edu}}}
\author{Nicholas J.~Cavenagh\thanks{Department of Mathematics, The University of Waikato, Private Bag 3105, Hamilton 3240, New Zealand.
\texttt{nickc@waikato.ac.nz}}}
\author{Ian M.~Wanless\thanks{School of Mathematics, Monash University, Clayton Vic 3800, Australia.
\texttt{ian.wanless@monash.edu}}}
\affil{}
\date{}
\begin{document}

\maketitle

\begin{abstract}
  A \emph{binary frequency square} of type $(n;\lambda_0,\lambda_1)$
  is a $(0,1)$-matrix of order $n$ with $\lambda_0$ zeros and
  $\lambda_1$ ones in each row and in each column. Two such squares
  are \emph{orthogonal} if there are exactly $\lambda_1^2$ cells where
  both squares contain ones.  A set of binary \MOFS\ is a set of
  binary frequency squares in which each pair is orthogonal. A set of
  binary \MOFS\ of type $(n;\lambda_0,\lambda_1)$ is \emph{type
    maximal} if there is no square of the type
  $(n;\lambda_0,\lambda_1)$ that is orthogonal to every square in the
  set.

  A \emph{Doehlert-Klee design} consists of points $\cV$ and blocks
  $\cB$, where every pair of points occurs in precisely $\Lambda$
  blocks and every point occurs in precisely $R$ blocks, where
  $R^2=\Lambda|\cB|$.  We show that sets of binary \MOFS\ are
  equivalent to a particular kind of Doehlert-Klee design. In a
  distinct application, Doehlert-Klee designs can also be used to
  construct sets of binary \MOFS\ that are cyclically generated from their
  first rows.  We use these connections to find new constructions for
  sets of type-maximal binary \MOFS.
\end{abstract}

%\textbf{Keywords and MSC Code:}  05B30

\section{Introduction}

A \emph{frequency square} of {\it type}
$(n;\lambda_0,\lambda_1,\dots,\lambda_{m-1})$ is an $n \times n$ array
with entries from the set $\{0,1,\dots,m-1\}$, where entry $i$ occurs
precisely $\lambda_i$ times in each row and in each column, for $0\le
i<m$.  We say that symbol $i$ has \emph{frequency} $\lambda_i$ and
note that $n=\sum_{i}\lambda_i$. Two frequency squares $F_1$ and $F_2$
of type $(n; \lambda_0,\lambda_1,\dots, \lambda_{m_1-1})$ and $(n;
\mu_0,\mu_1,\dots, \mu_{m_2-1})$ respectively, are {\it
  orthogonal} if, when superimposed, each of the $m_1m_2$ possible
ordered pairs $(i,j)$ with $0\leq i<m_1$ and $0\leq j<m_2$, occurs
exactly $\lambda_i\mu_j$ times. A set of frequency squares is said to
be {\it mutually orthogonal} if every two distinct members of the set
are orthogonal. A set of $k$ mutually orthogonal frequency squares
(MOFS) of order $n$ will be referred to as a set of $k$-$\MOFS(n)$, or
simply a set of $k$-MOFS. A set of MOFS is said to have \emph{pure
  type} if we require each of its squares to have the same type,
and otherwise we say it has \emph{mixed type}.  Since the unique
frequency square of type $(n;n)$ is trivially orthogonal to every
other frequency square, we will assume all our frequency squares have
at least two symbols.  We mainly focus on sets of MOFS in which each
of the frequency squares has precisely two symbols and we refer to these as sets of \emph{binary}
MOFS.

A set $\{F_1,F_2,\dots,F_k\}$ of MOFS is said to be \emph{maximal} if
there does not exist a frequency square $F$ that is orthogonal to
$F_i$ for $1 \leq i \leq k$. Note that this definition of maximal does
not require $F$ to be of any particular type. We also define a more
restricted version of maximality.  A set $\{F_1,F_2,\dots,F_k\}$ of
MOFS is said to be \emph{type-maximal} if there does not exist a
frequency square $F$ such that
\begin{itemize}
\item $F$ is orthogonal to $F_i$ for each $1 \leq i \leq k$, and
\item there is some $t \in \{1,2,\dots,k\}$ such that $F_t$ is of the
  same type as $F$.
\end{itemize}
A set of MOFS that is maximal is necessarily type-maximal, but the
converse statement fails \cite{Stinson66}.  To date,
research into the maximality of sets of MOFS has focused primarily on
type-maximality (see \cite{MOFS-EJC,MOFS-2021,JJ-2020}). However,
those papers do not consider sets of MOFS of mixed type, and they use
``maximal'' in place of what we are calling ``type-maximal''.

The study of \MOFS\ includes the study of mutually orthogonal Latin
squares (MOLS) which is one of the most famous topics in
combinatorics.  Many of the applications of MOLS can utilise more
general sets of MOFS. It is also reasonable to hope that developing the theory
of MOFS will lead to greater understanding of MOLS, which remain the
subject of famous open problems such as the prime power conjecture.
Understanding (type-)maximality is very closely related to the key
question of how large a set of MOFS with particular parameters can be
built. In this paper we find several new methods for constructing sets of
type-maximal MOFS.

A \emph{design} is a pair $(\cV,\cB)$ where $\cV$ is a finite set of
\emph{points} and $\cB$ is a collection of \emph{blocks}, each of
which is a subset of $\cV$.  For positive integers $R$ and $\Lambda$,
an $(R,\Lambda)$-design is a design in which every pair of points
occurs in precisely $\Lambda$ blocks and every point occurs in
precisely $R$ blocks.  In general, blocks can be repeated and some
blocks may be empty.  Let $\cD$ be an $(R,\Lambda)$-design $(\cV,\cB)$
and let $V=|\cV|$ and $B=|\cB|$. If $R^2=\Lambda B$, then $\cD$ is a
{\em Doehlert-Klee $(\DK)$ design} or {\em
  \DK$(R,\Lambda)$-design}.  \DK-designs were first introduced in 1972
(see \cite{DK72}), with a sequence of papers written on them within
the same decade by Stanton, Vanstone and McCarthy (see
\cite{MSV76}, \cite{Sta78}, \cite{SV77c}, \cite{SV77b} and
\cite{SV77a}). The purpose of this paper is to explore several
previously unnoticed connections between sets of binary MOFS and \DK-designs.

The structure of this paper is as follows. In Section \ref{s:backg} we
cover background material, including reviewing results from
\cite{Stinson66} that are useful in diagnosing the maximality of
sets of \MOFS. We also present some new results regarding type-maximal and
maximal sets of \MOFS\ that extend upon results and examples from the same
paper. In Section \ref{MOFSareDKs} we show that sets of MOFS are equivalent to
a particular kind of \DK-design, in which the blocks of the
design can be arranged as a square array with elements occurring a
constant number of times in each row and column.  Our main results are
\tref{t:main1} and \tref{dk2mofs}.  In Section \ref{MaxMOFSfromDKs} we
use this equivalence to construct sets of type-maximal \MOFS. Finally, in
Section \ref{cyclic} we use a different approach to construct
sets of type-maximal \MOFS\ from \DK-designs, where the \MOFS\ are
generated cyclically from their first rows.

\section{Background}\label{s:backg}

\subsection{Designs}

A \emph{pairwise balanced design} (PBD) is a design $(\cV,\cB)$ such
that every distinct pair of elements of $\cV$ occurs in precisely
$\Lambda$ blocks, for some positive integer $\Lambda$. Let $V=|\cV|$
and let $\cK=\{|b|:b\in \cB \}$. Then we say a PBD has order $V$ with
block sizes from $\cK$ and index $\Lambda$ and we write
PBD$(V,\cK,\Lambda)$.  Note that blocks of size zero or one have no impact
on the occurrences of pairs in blocks. Therefore in the literature,
blocks of these sizes are usually not considered. However, as will be
become clear in Section~\ref{MOFSareDKs}, it will be convenient for our
purposes to allow $\{0,1\}\in \cK$ in the definition of a PBD. 

If every element occurs in a constant number of blocks, then we say
the PBD is \emph{point-regular}.  In fact, a point-regular PBD, is
equivalent to an $(R,\Lambda)$-design.  Note also that any PBD can be
made point-regular by the addition of blocks of size one.

If we take a PBD 
and impose an additional constraint
that each subset in $\cB$ has constant cardinality $K$ then we have
defined a \emph{balanced incomplete block design} (BIBD). (In fact, any such PBD is also point-regular, so one could also say that a BIBD is an $(R,\Lambda)$-design with $|\cK|=1$.) 

Let $V=|\cV|$ and $B=|\cB|$. We write the parameters of a BIBD as
$(V,B, R,K,\Lambda)$.  If $K\geq 2$, $V$, $K$ and $\Lambda$ determine
the remaining two parameters via $R=\Lambda(V-1)/(K-1)$ and $B=VR/K$,
hence we often refer to a BIBD as a $(V,K,\Lambda)$-design.

\subsection{Mutually orthogonal frequency squares}

For the majority of this paper we require our sets of MOFS to have pure type, 
and in this case we refer to a set of $k$-MOFS in which each frequency square 
is of type $(n;\lambda_0,\lambda_1,\dots, \lambda_{m-1})$ as 
a set of $k$-$\MOFS(n; \lambda_0,\lambda_1,\dots, \lambda_{m-1})$. 
Note that applying any
surjection onto $\{0,1\}$ (or, indeed, any other function) to the
symbols of a frequency square does not alter orthogonality between
that square and any other frequency square. For this reason, a set of
MOFS is maximal if and only if it cannot be extended by a binary
frequency square.

For the remainder of this section we consider methods for diagnosing
maximality of sets of MOFS. Such techniques often consider the $\Z_w$-sum of
a set of MOFS, which is the matrix obtained from the set using normal matrix
addition, mod some positive integer $w$.  The following theorem was
proved in \cite{Stinson66}. Let $J_{a,b}$ denote an $a \times b$ array
with every entry equal to one. For simplicity, we will use $J_{a}$ to
denote $J_{a,a}$ and we will use $0$ to denote a block of zeros whose dimensions are implied.

\begin{theorem}\label{thm-block-structure}
  Let $w$ be a non-negative integer.  Let $\cF=\{F_1,F_2,\dots,F_k\}$
  be a set of binary \MOFS\ where $F_t$ has type
  $(n;\lambda_{0,t},\lambda_{1,t})$ for $1 \leq t \leq k$. Suppose
  $\cF$ has $\Z_w$-sum that, up to permutation of the rows and
  columns, has the following structure of constant blocks
  \begin{equation}\label{block-matrixJ}
    \left[ \begin{array}{cc}
	{x_1}J_{a,b} & {x_2}J_{a,n-b} \\
	{x_3}J_{n-a,b} & {x_4}J_{n-a,n-b}
      \end{array}\right],
  \end{equation}
  where $x_1,x_2,x_3,x_4$ are non-negative integers satisfying
  $x_1+x_4\equiv x_2+x_3 \bmod w$. Let $F$ be a square of type
  $(n;\mu_0,\mu_1,\dots,\mu_{m-1})$. If $\cF \cup \{F\}$ is a set of
  $(k+1)$-\MOFS\ then for $0\leq i<m$ we have
  \begin{equation}\label{(1,1)count}
    \mu_i \sum_{t=1}^k \lambda_{1,t}\equiv a\mu_i(x_2-x_4)+b\mu_i(x_3-x_4) + \mu_i n x_4 \bmod w.
  \end{equation}
\end{theorem}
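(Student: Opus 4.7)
The idea is to compute the sum of the entries of $\sum_{t=1}^k F_t$ over the cells where $F$ takes the symbol $i$, in two different ways, and then reduce modulo $w$.

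First I would use orthogonality. For each fixed $t$, the orthogonality of $F_t$ with $F$ says that the pair $(1,i)$ occurs $\lambda_{1,t}\mu_i$ times in the superposition of $F_t$ and $F$; equivalently, since $F_t$ is a $(0,1)$-matrix, the sum of the entries of $F_t$ over the cells where $F$ equals $i$ is exactly $\lambda_{1,t}\mu_i$. Summing over $t$, the integer sum of the entries of $\sum_{t=1}^{k} F_t$ over those cells equals $\mu_i\sum_{t=1}^k \lambda_{1,t}$. Reducing modulo $w$, this number is congruent to the sum, over the same set of cells, of the entries of the $\Z_w$-sum matrix $S$ described by \eref{block-matrixJ}.

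Next I would compute this second quantity using the block structure. Let $N_1,N_2,N_3,N_4$ be the number of cells containing the symbol $i$ (in $F$) that lie in the four blocks of \eref{block-matrixJ}, respectively. The row- and column-frequency conditions on $F$ give
\begin{align*}
N_1+N_2 &= a\mu_i, & N_3+N_4 &= (n-a)\mu_i,\\
N_1+N_3 &= b\mu_i, & N_2+N_4 &= (n-b)\mu_i,
\end{align*}
so three of the $N_j$ are determined by $N_1$, and in particular $N_4=(n-a-b)\mu_i+N_1$. Substituting into $x_1N_1+x_2N_2+x_3N_3+x_4N_4$ gives
\[
(x_1-x_2-x_3+x_4)N_1 + a\mu_i x_2 + b\mu_i x_3 + (n-a-b)\mu_i x_4.
\]
The hypothesis $x_1+x_4\equiv x_2+x_3\bmod w$ annihilates the coefficient of $N_1$ modulo $w$, leaving precisely $a\mu_i(x_2-x_4)+b\mu_i(x_3-x_4)+n\mu_i x_4$ mod $w$. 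Equating the two expressions for the sum over $\{F=i\}$ yields \eref{(1,1)count}.

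The only mildly delicate point is that the four quantities $N_1,\dots,N_4$ are not individually determined by the type of $F$, so one must check that whichever free parameter remains (here chosen as $N_1$) is killed by the assumed congruence on the $x_j$; this is exactly where the block-sum hypothesis $x_1+x_4\equiv x_2+x_3 \bmod w$ is used, and once this is noticed the rest is bookkeeping with the row- and column-sum conditions coming from the frequency-square type of $F$.
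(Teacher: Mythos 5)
Your argument is correct: counting the cells where $F$ carries symbol $i$ against the $\Z_w$-sum in two ways, and observing that the one undetermined parameter $N_1$ is annihilated precisely by the hypothesis $x_1+x_4\equiv x_2+x_3\bmod w$, is exactly the intended double-counting proof. The paper does not reprove this theorem (it imports it from reference \cite{Stinson66}), but your derivation matches the standard argument for it and all the bookkeeping checks out, including the degenerate cases $w\in\{0,1\}$.
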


\begin{corollary}\label{cor-block-structure}
  Let $\cF=\{F_1,F_2,\dots,F_k\}$ be a set of binary \MOFS\ where
  $F_t$ has type $(n; \lambda_{0},\lambda_{1})$ for $1 \leq t \leq k$
  and suppose some non-negative integers $x_1,x_2,x_3,x_4$ satisfy the
  hypotheses of \tref{thm-block-structure}. Then, if either $\lambda_0$
  or $\lambda_1$ is
  coprime to $w$ and
  \begin{equation}\label{cor-(1,1)count}
    k\lambda_1\not\equiv a(x_2-x_4)+b(x_3-x_4) + n x_4 \bmod w,
  \end{equation}	
  then the set $\cF$ is type-maximal. 	
\end{corollary}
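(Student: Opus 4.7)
The plan is to argue by contradiction using \tref{thm-block-structure} directly. Suppose $\cF$ fails to be type-maximal. Then there exists a frequency square $F$ orthogonal to every $F_t$, with $F$ of the same type as some $F_t$. Since each $F_t$ has type $(n;\lambda_0,\lambda_1)$, the square $F$ must itself be binary of type $(n;\lambda_0,\lambda_1)$, so in the notation of \tref{thm-block-structure} we have $m=2$ and (up to the ordering of the two symbols) $\mu_0=\lambda_0$, $\mu_1=\lambda_1$.

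Next I would feed $F$ into \tref{thm-block-structure}. Since every $F_t$ has the common first-symbol frequency $\lambda_{1,t}=\lambda_1$, the sum in \eref{(1,1)count} collapses to $\sum_{t=1}^k\lambda_{1,t}=k\lambda_1$, and \eref{(1,1)count} becomes
\begin{equation*}
\mu_i\bigl(k\lambda_1\bigr)\equiv\mu_i\bigl(a(x_2-x_4)+b(x_3-x_4)+nx_4\bigr)\pmod{w}
\end{equation*}
for each $i\in\{0,1\}$, i.e.\
\begin{equation*}
\mu_i\bigl[k\lambda_1-a(x_2-x_4)-b(x_3-x_4)-nx_4\bigr]\equiv 0\pmod{w}.
\end{equation*}

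To finish, I would use the coprimality hypothesis to cancel $\mu_i$. If $\lambda_1$ is coprime to $w$, take $i=1$ (so $\mu_1=\lambda_1$) and cancel; if instead $\lambda_0$ is coprime to $w$, take $i=0$ and cancel $\mu_0=\lambda_0$. In either case, we obtain
\begin{equation*}
k\lambda_1\equiv a(x_2-x_4)+b(x_3-x_4)+nx_4\pmod{w},
\end{equation*}
which directly contradicts \eref{cor-(1,1)count}. Hence no such $F$ exists and $\cF$ is type-maximal.

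There is really no serious obstacle here; the argument is a one-line specialisation of \tref{thm-block-structure} followed by a cancellation. The only mildly subtle point is recognising that ``type-maximal'' forces the extending square $F$ to be binary with the same parameters $(\lambda_0,\lambda_1)$, which is what allows us to take $\mu_i\in\{\lambda_0,\lambda_1\}$ and then exploit whichever of the two is a unit modulo $w$.
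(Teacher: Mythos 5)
Your argument is correct and is precisely the intended derivation: the paper states the corollary without proof because it is exactly this immediate specialisation of Theorem~\ref{thm-block-structure} (set $\mu_0=\lambda_0$, $\mu_1=\lambda_1$, note $\sum_t\lambda_{1,t}=k\lambda_1$, and cancel whichever of $\mu_0,\mu_1$ is a unit modulo $w$). Nothing further is needed.
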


In \cite{Stinson66} it was not made clear whether in
\tref{thm-block-structure} we should allow the degenerate case where
$a\in\{0,n\}$ or $b\in\{0,n\}$. We now show that it is a moot point in
the sense that \eref{(1,1)count} is always satisfied in this
degenerate case. It suffices to consider the case $a=0$. Counting the
ones in the first rows of $F_1,\dots,F_t$ we find that
  \begin{equation}
    \sum_{t=1}^k \lambda_{1,t}\equiv bx_3+(n-b)x_4
    \equiv a(x_2-x_4)+b(x_3-x_4) + n x_4 
    \bmod w,
  \end{equation}
from which \eref{(1,1)count} follows.

The extreme case of degeneracy in \eref{block-matrixJ} is where the
set of \MOFS\ has constant $\Z_w$-sum (so $a\in\{0,n\}$ and
$b\in\{0,n\}$). Such sets of \MOFS\ do exist, although we show in
\tref{t:constZ} that no set of binary \MOFS\ of pure type has constant
$\Z$-sum. At the end of Section \ref{MOFSareDKs}, we will see examples of
sets of \MOFS\ with constant $\Z_w$-sum.  We will then see in
Section~\ref{MaxMOFSfromDKs} that in some circumstances such
sets of \MOFS\ can be used to construct maximal sets of \MOFS, notwithstanding
our comments above that \tref{thm-block-structure} cannot be applied
directly to show maximality.

A set of binary $k$-$\MOFS(n;n-1,1)$ is equivalent to an {\it equidistant
permutation array} (EPA). An EPA $A(n,d;k)$ is a $k \times n$ array
in which each row contains each integer from $1$ to $n$ precisely
once, and every pair of rows differ in precisely $d$ positions. Now,
each square in a set of $k$-$\MOFS(n;n-1,1)$ is a permutation matrix. Taking
the corresponding permutations as the rows of a $k \times n$ array
yields an $A(n,n-1;k)$.

An easy way to use sets of MOFS to obtain sets of MOFS of larger order is to use
\emph{dilation}. To dilate a set of $k$-$\MOFS(n;n-\lambda,\lambda)$
we replace each zero by a block of zeros of order $\lambda$, and
replace each one by a block of ones of order $\lambda$. This yields
a set of $k$-\MOFS$(\lambda n;\lambda n-\lambda^2,\lambda^2)$.

Next we give a simple generalisation of Example 5.6 from \cite{Stinson66}.
Our example involves dilating a set of MOFS that is equivalent to an
EPA.

\begin{example}\label{eg:dilperm}
Let $n$ be an integer, and let $d>1$ be any divisor
of $n$. Take the $d-1$ permutation matrices corresponding to the
powers of the cycle $(23\cdots d)$ and then dilate by a factor of
$\lambda=n/d$.  This yields a set of binary
$(d-1)$-\MOFS$(n;n-\lambda,\lambda)$. Their $\Z$-sum has the block
structure \eref{block-matrixJ} with $x_1=d-1$, $x_2=x_3=0$ and
$x_4=1$. So if we take $q$ to be any divisor of $d$ then
$x_1+x_4\equiv x_2+x_3 \bmod q$.
\end{example}

Suppose we have a frequency square of type $(n;n-\mu,\mu)$ which
extends our set of MOFS in \egref{eg:dilperm}. If $\gcd(\mu,q)=1$ then
Corollary 5.4 from \cite{Stinson66} says $(d-1)\lambda=-2\lambda \bmod
q$ so $q\mid(d+1)\lambda=n+\lambda$. But $q\mid d$ and $d\mid n$ so
this last condition is equivalent to $q\mid\lambda$.

Hence we arrive at the following result.

\begin{theorem}\label{type-max}
The set of binary $(d-1)$-\MOFS$(n;n-\lambda,\lambda)$ constructed in
\egref{eg:dilperm} are type-maximal unless $\lambda$ is divisible by
every prime divisor of $d$.
\end{theorem}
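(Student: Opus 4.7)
The plan is to apply \Cref{cor-block-structure} to $\cF$, the set of $(d-1)$-\MOFS$(n;n-\lambda,\lambda)$ from \egref{eg:dilperm}, with $w$ taken to be a prime $p$ that divides $d$ but not $\lambda$. Such a $p$ exists precisely because the hypothesis excludes $\lambda$ from being divisible by every prime factor of $d$.

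The set-up matches \tref{thm-block-structure} once we recall from \egref{eg:dilperm} that the $\Z$-sum of $\cF$ has the form \eref{block-matrixJ} with $x_1=d-1$, $x_2=x_3=0$, $x_4=1$. Since $p\mid d$, we have $x_1+x_4=d\equiv 0\equiv x_2+x_3\pmod p$, so the compatibility assumption is satisfied. A quick row-sum check (each row of the $\Z$-sum has entries summing to $(d-1)\lambda$) forces $a=b=\lambda$.

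The remaining task is to verify \eref{cor-(1,1)count}. The right-hand side evaluates to
\begin{equation*}
a(x_2-x_4)+b(x_3-x_4)+nx_4 = -\lambda-\lambda+n = (d-2)\lambda,
\end{equation*}
using $n=d\lambda$, whereas the left-hand side is $k\lambda_1=(d-1)\lambda$. Their difference is $\lambda$, which is non-zero modulo $p$ by construction. Since $\gcd(\lambda_1,p)=\gcd(\lambda,p)=1$, the coprimality hypothesis also holds, so \Cref{cor-block-structure} concludes that $\cF$ is type-maximal.

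There is no substantial obstacle here: the proof is essentially bookkeeping built on top of the apparatus already in place (and is spelled out informally in the paragraph preceding the theorem). The only points meriting attention are the identification $a=b=\lambda$ in the block decomposition, which is forced by the row and column frequencies, and the observation that the stated hypothesis on $\lambda$ is exactly what is needed to supply a prime $p$ for which every condition of \Cref{cor-block-structure} simultaneously fires.
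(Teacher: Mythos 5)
Your proof is correct and follows essentially the same route as the paper: both apply \Cref{cor-block-structure} (the paper cites the equivalent Corollary 5.4 of \cite{Stinson66}) with modulus a prime $p$ dividing $d$ but not $\lambda$, using the block data $x_1=d-1$, $x_2=x_3=0$, $x_4=1$, $a=b=\lambda$ from \egref{eg:dilperm}, and reducing the extendability condition to $p\mid\lambda$. Your identification $a=b=\lambda$ and the explicit evaluation of \eref{cor-(1,1)count} are accurate, so there is nothing to add.
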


\begin{proof}
If there is any $p\mid d$ that is not a factor of $\lambda$ then take
$q=p$ and we have $\gcd(\lambda,q)=\gcd(\mu,q)=1$.
\end{proof}
 
In various cases such as when $n$ is square-free, or $\lambda=1$, for
example, we can always find the required $p$.  In fact combining these two
cases we can show our example is maximal, not just type-maximal.

\begin{theorem}\label{t:max}
The set of binary $(d-1)$-\MOFS$(n;n-\lambda,\lambda)$ constructed in
\egref{eg:dilperm} is maximal when $\lambda=1$ and $n$ is square-free.
\end{theorem}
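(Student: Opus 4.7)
My plan is to invoke \tref{thm-block-structure} directly (rather than \Cref{cor-block-structure}, which only yields type-maximality) and rule out extensions of every possible type. Recall that by the remark following \tref{thm-block-structure}, a set of \MOFS\ is maximal if and only if it cannot be extended by a \emph{binary} frequency square. So it suffices to show that no binary frequency square $F$ of type $(n;n-\mu,\mu)$ with $1\le\mu\le n-1$ is orthogonal to all members of our set.

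Specialising \egref{eg:dilperm} to $\lambda=1$ forces $d=n$, so the set consists of the $n-1$ powers of the single $(n-1)$-cycle $(2\,3\cdots n)$, and every square in the set has a $1$ in the $(1,1)$ cell. Since this cycle acts regularly on $\{2,3,\dots,n\}$, the $\Z$-sum of the set has block structure \eref{block-matrixJ} with $a=b=1$, $x_1=n-1$, $x_2=x_3=0$, and $x_4=1$. For any prime $p\mid n$ the hypothesis $x_1+x_4\equiv x_2+x_3\pmod p$ reads $n\equiv 0\pmod p$, which holds. So \tref{thm-block-structure} applies with $w=p$.

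Applying \eref{(1,1)count} to $F$ with $i=1$ (so $\mu_i=\mu$), using $\sum_{t=1}^{n-1}\lambda_{1,t}=n-1$, gives
\begin{equation}
\mu(n-1)\equiv \mu(0-1)+\mu(0-1)+\mu n\equiv \mu(n-2)\pmod p,
\end{equation}
whence $\mu\equiv 0\pmod p$. As this holds for every prime divisor $p$ of $n$, and $n$ is square-free, we deduce $n\mid\mu$, contradicting $1\le\mu\le n-1$. Hence no binary extension exists, and the set is maximal.

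There is no serious obstacle here: the substantive content is recognising that the conclusion of \tref{thm-block-structure} applied to a hypothetical extension $F$ of any binary type collapses to the divisibility condition $p\mid\mu$, and that varying $p$ over all prime divisors of $n$ exploits the square-free hypothesis to force $n\mid\mu$. The only care required is to use the theorem directly rather than \Cref{cor-block-structure}, since the latter restricts attention to extensions of the same type.
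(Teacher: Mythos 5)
Your proof is correct and follows essentially the same route as the paper: the paper likewise applies the block-structure congruence (via Corollary 5.4 of \cite{Stinson66}, the external form of \eref{(1,1)count}) to a hypothetical binary extension of type $(n;n-\mu,\mu)$, obtains $q\mid\mu\lambda$ for each prime $q\mid d$, and uses square-freeness of $n$ to pick a prime dividing $n$ but not $\mu$, which forces $q\mid\lambda=1$. Your phrasing via ``$p\mid\mu$ for every prime $p\mid n$, hence $n\mid\mu$'' is just the contrapositive packaging of the same argument, and your explicit reduction to binary extensions correctly justifies the step the paper leaves implicit.
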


\begin{proof}
Suppose we can extend our set of MOFS with a square of type
$(n;n-\mu,\mu)$. We can choose $q$ to be a prime that divides $n$ but
not $\mu$ (such exists, since $\mu<n$ and $n$ is square-free).  Then
$\gcd(\mu,q)=1$ and $q$ does not divide $\lambda$.
\end{proof}

It is worth noting that we cannot generalise \tref{t:max} to prove
maximality in other cases.  If $s^2\mid n$ then taking $\mu=n/s$ makes
it impossible to find $q\mid n$ such that $\gcd(q,\mu)=1$.  If
$\lambda>1$ then taking $\mu=d$ means that any $q\mid d$ does not
satisfy $\gcd(\mu,q)=1$.  (NB: If $\lambda=1$ then $\mu=d$ is not an
allowable choice, but otherwise it is).

\section{Binary MOFS are Doehlert-Klee designs}\label{MOFSareDKs}

In this section we show that a set of binary MOFS of pure type
is equivalent to a particular kind of DK-design. We will need the
following definitions.  Let $\cP_1$ and $\cP_2$ be partitions of the
same set.  We say that the pair of partitions $(\cP_1,\cP_2)$ is {\em
  orthogonal} if $|P_1\cap P_2|\le1$ for each $P_1\in \cP_1$ and $P_2\in
\cP_2$.  Next, we define a partition of a set of sets (in our context,
a set of blocks) to be an \emph{equipartition} if each point occurs
the same number of times among the blocks in each part. The case when
each point occurs once per part is known in design theory as a
\emph{resolution} and if a design has orthogonal resolutions it is
said to be \emph{doubly resolvable}. The situation we are studying is
a direct generalisation of these concepts.

Starting with a set of binary MOFS, we can produce a collection of blocks
as follows.  Let $\cF=\{F_1,F_2,\dots,F_k\}$ be a set of binary
$k$-$\MOFS(n)$.  For $1\leq i\leq n$ and $1\leq j\leq n$, let $B_{i,j}\subseteq
\{1,2,\dots,k\}$ be the set of points $x$ such that cell $(i,j)$ of
$F_x$ contains the entry one.
We say that these $n^2$ blocks are {\em derived} from $\cF$. For
example, below is a set of $2$-$\MOFS(3;2,1)$, along with the set of nine
blocks derived from it (presented as an array).
\begin{equation}\label{e:DKeg}
\left\{\left[\begin{array}{cccccc}
1&0&0\\
0&1&0\\
0&0&1\end{array} \right]
,
\left[ \begin{array}{cccccc}
1&0&0\\
0&0&1\\
0&1&0\end{array} \right]\right\}
\hspace{1.2cm}
\left[\begin{array}{cccccc}
\{1,2\}&\emptyset&\emptyset\\
\emptyset&\{1\}&\{2\}\\
\emptyset&\{2\}&\{1\}\end{array}\right].
\end{equation}
The following lemma describes the properties of the set of blocks derived 
from a set of \MOFS.

\begin{lemma}
  Let $\cF$ be a set of binary $k$-\MOFS$(n;\lambda_0,\lambda_1)$.
  Suppose that $\cB$ is the set of blocks derived from $\cF$.
  Then we have
  \begin{enumerate}
  \item $|\cB|=n^2$; 
  \item each block in 
    $\cB$ is a subset of $\cV:=\{1,2,\dots,k\}$;
  \item each element of $\cV$ occurs in $\lambda_1n$ blocks;  	 
  \item each pair of elements from $\cV$ occurs in precisely 
    $\lambda_1^2$ blocks. 
  \end{enumerate}
\end{lemma}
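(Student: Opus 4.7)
The plan is to verify the four properties directly from the definition of the blocks $B_{i,j}$, since the lemma is essentially a translation of the cellwise defining properties of a set of binary MOFS into the blockwise language of designs. Properties (1) and (2) are immediate from the construction: the blocks are indexed by the pairs $(i,j)$ with $1\le i,j\le n$, which gives $|\cB|=n^2$, and by construction each $B_{i,j}$ is a subset of $\{1,2,\dots,k\}=\cV$.

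For property (3), I would fix $x\in\cV$ and count pairs $(i,j)$ with $x\in B_{i,j}$. By definition of the derived blocks, $x\in B_{i,j}$ if and only if cell $(i,j)$ of $F_x$ contains a $1$. Since $F_x$ is a binary frequency square of type $(n;\lambda_0,\lambda_1)$, each of its $n$ rows contains exactly $\lambda_1$ ones, so $F_x$ has exactly $\lambda_1 n$ cells equal to $1$. Hence $x$ appears in exactly $\lambda_1 n$ blocks, as claimed.

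For property (4), I would fix distinct $x,y\in\cV$ and count blocks that contain both. Again by definition, $\{x,y\}\subseteq B_{i,j}$ if and only if both $F_x$ and $F_y$ have a $1$ in cell $(i,j)$. By the definition of orthogonality of binary frequency squares recalled in the abstract, the number of cells in which $F_x$ and $F_y$ simultaneously contain a $1$ is exactly $\lambda_1^2$, so the pair $\{x,y\}$ lies in precisely $\lambda_1^2$ blocks. There is no real obstacle in this proof; the lemma amounts to reading off the row/column frequency and the pairwise orthogonality conditions from the derived array, and the only thing to be careful about is distinguishing ``pair'' in the sense of two points of $\cV$ from ``pair'' in the sense of an ordered $(1,1)$-coincidence between two squares, so that property (4) invokes orthogonality of $F_x$ with $F_y$ for each of the $\binom{k}{2}$ pairs.
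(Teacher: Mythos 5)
Your proof is correct and is exactly the argument the paper intends: the paper states this lemma without proof because, as you observe, each of the four properties is an immediate translation of the definition of the derived blocks, the row-frequency condition, and pairwise orthogonality. Nothing is missing.
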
 

It follows that the set of blocks derived from such an $\cF$ is a
\DK$(\lambda_1n,\lambda_1^2)$-design. This \DK-design has the
additional property that its $n^2$ blocks can be arranged in an array,
where each element occurs $\lambda_1$ times in each row and column.
From this array, let $\cP_1$ and $\cP_2$ be the partitions of $\cB$
corresponding to the rows and columns.  Then, $\cP_1$ and $\cP_2$ are
each equipartitions of type $n^n$ (meaning they consist of $n$ parts
of cardinality $n$).  Moreover, the pair of partitions $\cP_1$ and
$\cP_2$ is {\em orthogonal}.

The partitions corresponding to the example \eref{e:DKeg}
are shown below.
\begin{align*}
\cP_1&=\{\{B_{1,1},B_{1,2},B_{1,3}\},\{B_{2,1},B_{2,2},B_{2,3}\},\{B_{3,1},B_{3,2},B_{3,3}\}\},\\
\cP_2&=\{\{B_{1,1},B_{2,1},B_{3,1}\},\{B_{1,2},B_{2,2},B_{3,2}\},\{B_{1,3},B_{2,3},B_{3,3}\}\},\\
B_{1,1}&=\{1,2\},B_{1,2}=B_{1,3}=B_{2,1}=B_{3,1}=\emptyset,
B_{2,2}=B_{3,3}=\{1\},B_{2,3}=B_{3,2}=\{2\}.
\end{align*}
Note that some blocks derived from \eref{e:DKeg} are repeated and some
blocks are empty. 
The former property is not uncommon in design theory,
but it is rarer to allow empty blocks.

We have shown the following.

\begin{theorem}\label{t:main1}
  Suppose a set of blocks $\cB$ is derived from a set of binary
  $k$-$\MOFS(n;\lambda_0,\lambda_1)$.  Then $\cB$ is a
  \DK$(\lambda_1n,\lambda_1^2)$-design with $k$ elements and $n^2$
  blocks. Furthermore, there are two orthogonal equipartitions,
  ${\cP_1}$ and ${\cP_2}$ of $\cB$, 
  each of type $n^n$. 
\end{theorem}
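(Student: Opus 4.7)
The plan is that almost all of the work has already been done by the preceding lemma; what remains is essentially to check that its conclusions match the defining conditions of a \DK-design and to observe that the row/column indexing of the $n^2$ blocks built from $\cF$ automatically supplies the two orthogonal equipartitions. So the proof will be short: unpack the lemma, verify an arithmetic identity, and then read off the partition structure from the construction.

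First, I would apply the lemma to the derived block collection $\cB$. This gives $|\cV|=k$, $|\cB|=n^2$, with every point lying in exactly $R:=\lambda_1 n$ blocks and every pair of points in exactly $\Lambda:=\lambda_1^2$ blocks. Thus $\cB$ is an $(R,\Lambda)$-design. To promote this to a \DK-design I need $R^2=\Lambda|\cB|$, i.e.\ $(\lambda_1 n)^2=\lambda_1^2\cdot n^2$, which is immediate. Hence $\cB$ is a \DK$(\lambda_1 n,\lambda_1^2)$-design, giving the first sentence of the theorem.

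For the partitions, let $\cP_1$ be the partition of $\cB$ whose parts are the $n$ rows $\{B_{i,1},B_{i,2},\dots,B_{i,n}\}$ of the $n\times n$ arrangement, and let $\cP_2$ be the analogous partition into columns. Each part has $n$ blocks, so both partitions have type $n^n$. To verify the equipartition property for $\cP_1$, fix $x\in\cV$ and a row index $i$. By the construction of the $B_{i,j}$, the number of parts of row $i$ containing $x$ equals the number of $j$ with $(F_x)_{i,j}=1$, and this is exactly $\lambda_1$ because $F_x$ is a binary frequency square of type $(n;\lambda_0,\lambda_1)$. The same reasoning with columns replacing rows handles $\cP_2$.

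Finally, orthogonality of $(\cP_1,\cP_2)$ is transparent: for any $i,j$ the corresponding row-part and column-part intersect precisely in the single block $B_{i,j}$, so $|P_1\cap P_2|=1$ for all $P_1\in\cP_1$ and $P_2\in\cP_2$. This completes every claim in the theorem. There is no genuine obstacle here; the only place one might slip is in confusing the two senses of ``balance'' (each point in each part versus each part having equal size), but since the parts already have common size $n$ and the row/column frequency of each symbol in each $F_x$ is $\lambda_1$, both conditions are automatic.
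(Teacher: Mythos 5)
Your proposal is correct and follows essentially the same route as the paper: invoke the preceding lemma for the design parameters, note that $(\lambda_1 n)^2=\lambda_1^2\cdot n^2$ gives the \DK\ condition, and read off the row and column partitions of the $n\times n$ block array as the two orthogonal equipartitions of type $n^n$. No issues to flag.
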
 

In fact, the process above is reversible.  Let $\lambda_1$ and $n$ be
positive integers. Suppose there exists a
\DK$(\lambda_1n,\lambda_1^2)$-design $(\cV,\cB)$ with $|\cB|=n^2$,
together with two orthogonal equipartitions, ${\cP_1}$ and ${\cP_2}$
of $\cB$, each of type $n^n$.  By the definition of equipartition,
each element of $\cV$ occurs $\lambda_1$ times in each part of
${\cP_1}$ and ${\cP_2}$.

Let $k=|\cV|$ and let ${\cP_1}=\{R_1,R_2,\dots,R_n\}$ and
${\cP_2}=\{C_1,C_2,\dots,C_n\}$.  For $1\le i,j\le n$ 
orthogonality implies that $|R_i\cap C_j|\le1$, but then a counting
argument shows that in fact equality holds.  Define $B_{i,j}$ to be
the unique element of $R_i\cap C_j$.  Next, construct a set
$\cF=\{F_1,F_2,\dots,F_k\}$ of $n\times n$ binary arrays where cell
$(i,j)$ of $F_x$ contains the entry one if and only if $x\in B_{i,j}$, for $1\le
x\le k$ and $1\le i,j\le n$. We say that the set $\cF$ is {\em
  derived} from the \DK-design $(\cV,\cB)$.

\begin{theorem}\label{dk2mofs}
  Suppose there exists a \DK-design $(\cV,\cB)$
  with $|\cV|=k$ and $|\cB|=n^2$ blocks together with two orthogonal
  equipartitions, ${\cP_1}$ and ${\cP_2}$ of $\cB$, each of type $n^n$.  
   Let $\cF$ be the set derived from $(\cV,\cB)$.
  Then $\cF$ is a set of $k$-\MOFS\-$(n;n-\lambda_1,\lambda_1)$.
\end{theorem}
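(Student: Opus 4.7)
The plan is to verify that the construction directly reverses the derivation used in \tref{t:main1}, by checking the row and column sums of each $F_x$ and then the pairwise orthogonality of the $F_x$. Since the construction of $\cF$ and the well-definedness of $B_{i,j}$ have already been set up in the paragraph preceding the theorem statement, this is essentially a bookkeeping exercise.

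First I would pin down the parameters. The \DK-condition $R^2=\Lambda|\cB|=\Lambda n^2$ forces $\Lambda$ to be a perfect square and $\lambda_1:=R/n$ to be a positive integer. A short counting argument, using the fact that $\cP_1$ is an equipartition into $n$ parts and that each point lies in $R=\lambda_1 n$ blocks in total, shows that each point appears in exactly $\lambda_1$ blocks of each part of $\cP_1$, and likewise for $\cP_2$; moreover $\Lambda=\lambda_1^2$.

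Next I would argue that each $F_x$ is a frequency square of type $(n;n-\lambda_1,\lambda_1)$. By construction, cell $(i,j)$ of $F_x$ contains a one iff $x\in B_{i,j}$, where $B_{i,j}$ is the unique element of $R_i\cap C_j$ (well-defined because orthogonality of $(\cP_1,\cP_2)$ together with $|R_i|=|C_j|=n$ and $|\cB|=n^2$ forces equality in $|R_i\cap C_j|\le 1$). Thus the number of ones in row $i$ of $F_x$ equals the number of blocks in $R_i$ containing $x$, namely $\lambda_1$; and symmetrically for columns.

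Finally I would verify mutual orthogonality. For distinct $x,y\in\cV$, the number of cells where $F_x$ and $F_y$ both contain a one equals the number of blocks containing both $x$ and $y$, which is $\Lambda=\lambda_1^2$ by the \DK-design property. For two binary frequency squares of the same type $(n;n-\lambda_1,\lambda_1)$, once the $(1,1)$-count is $\lambda_1^2$ the row and column sums force the remaining three pair-counts to be $\lambda_1(n-\lambda_1)$, $(n-\lambda_1)\lambda_1$ and $(n-\lambda_1)^2$, so orthogonality follows. There is no real obstacle; the only step requiring any thought is the well-definedness of $B_{i,j}$ and the matching of the \DK-parameters with the frequency-square type, which is why I would address those first.
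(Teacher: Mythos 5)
Your proposal is correct and follows the same route as the paper, which states \tref{dk2mofs} without a separate formal proof, relying on the construction and observations (well-definedness of $B_{i,j}$ via the counting argument, and each point occurring $\lambda_1$ times per part) laid out in the paragraph immediately preceding the theorem. Your write-up simply makes explicit the parameter bookkeeping and the orthogonality count that the paper leaves implicit.
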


Consider a set of $k$-MOFS with constant sum in $\Z$. The blocks derived from this set of MOFS will have constant size. Furthermore, if a \DK-design is derived from a set of MOFS, and has constant block size, then it must be trivial (that is, $\lambda_1\in \{0,n\}$).  

\begin{lemma}
  Let $\cD$ be the \DK-design derived from a set of \MOFS$(n;\lambda_0,\lambda_1)$. 
  If $\cD$ has constant block size, then $\lambda_1\in \{0,n\}$ and thus 
  $\cD$ is a $(0,0)$- or $(n^2,n^2)$-design and is therefore trivial.
\end{lemma}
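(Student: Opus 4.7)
The plan is to combine two double-counting arguments with the DK parameters to force $\lambda_1\in\{0,n\}$. Let $K$ denote the common block size. Counting incidences $\{(x,B_{i,j}) : x\in B_{i,j}\}$ by blocks gives $n^2K$, while counting by points gives $k\cdot\lambda_1 n$, since each of the $k$ points lies in $R=\lambda_1 n$ blocks. Hence $K=k\lambda_1/n$.

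Next, count triples $(\{x,y\},B_{i,j})$ with $x\ne y$ both lying in $B_{i,j}$ in two ways. Each of the $n^2$ blocks contributes $\binom{K}{2}$ such pairs, and each of the $\binom{k}{2}$ pairs of distinct points lies in $\Lambda=\lambda_1^2$ blocks, so
\[
n^2\binom{K}{2}=\binom{k}{2}\lambda_1^2.
\]
Substituting $K=k\lambda_1/n$ and clearing denominators gives $k\lambda_1(k\lambda_1-n)=k(k-1)\lambda_1^2$. Assuming $k\lambda_1\ne 0$, dividing by $k\lambda_1$ yields $k\lambda_1-n=(k-1)\lambda_1$, i.e.\ $\lambda_1=n$. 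The remaining degenerate case $k\lambda_1=0$ forces $\lambda_1=0$ (as $k\ge 1$).

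Finally, I would verify the two conclusions match the claimed designs. When $\lambda_1=0$ each $F_x$ is the all-zeros matrix, every $B_{i,j}$ is empty, and $\cD$ is a $(0,0)$-design. When $\lambda_1=n$ each $F_x$ is the all-ones matrix, every $B_{i,j}=\cV$, and $\cD$ is an $(n^2,n^2)$-design. Both are trivial, completing the proof. There is no real obstacle here: the only thing to watch is the bookkeeping around the degenerate case $k\lambda_1=0$, which is handled directly by the integrality constraint coming from the first double count.
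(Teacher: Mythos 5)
Your proof is correct and takes essentially the same route as the paper: both treat the derived design as a BIBD with $V=k$, $B=n^2$, $R=n\lambda_1$, $\Lambda=\lambda_1^2$ and use the standard parameter relations (the incidence count and the pair count) to force $\lambda_1=n$, with $\lambda_1=0$ as the degenerate alternative. If anything your version is the cleaner one: the paper writes the first identity as $VB=RK$ (apparently a typo for $VR=BK$), which gives $K=kn/\lambda_1$ and a quadratic in $n$, whereas your correct $K=k\lambda_1/n$ turns the pair count into a linear equation with the same positive root $n=\lambda_1$, and you also handle the $k\lambda_1=0$ case and verify the $(0,0)$ and $(n^2,n^2)$ conclusions explicitly.
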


\begin{proof}
Suppose the collection of blocks $\cB$ derived from a set of
$k$-$\MOFS(n;\lambda_0,\lambda_1)$ is a BIBD with parameters
$(V,B,R,K,\Lambda)$.  Then, $V=k$, $\Lambda=\lambda_1^2$,
$R=n\lambda_1$ and $B=n^2$. Since a BIBD satisfies $VB=RK$ we have
that $K=kn/\lambda_1$ (if $\lambda_1\neq 0$).  Furthermore, since
$\Lambda(V-1)=R(K-1)$, it follows that
$$\lambda_1^2(k-1)= n(nk-\lambda_1).$$
Solving this quadratic for $n>0$ we find that $n=\lambda_1$ so $V=K$.   
\end{proof}

We have proven the following theorem. 
 
\begin{theorem}\label{t:constZ}
  Suppose that $\{\lambda_0,\lambda_1\}\cap\{0,n\}=\emptyset$.
  Then there does not exist a set of \MOFS\ of 
  type $(n;\lambda_0,\lambda_1)$ with constant sum in $\Z$. 
\end{theorem}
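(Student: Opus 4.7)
My plan is to apply the preceding lemma via the observation that a set of binary MOFS with constant $\Z$-sum yields a derived DK-design with constant block size, so the hypothesis of the lemma is automatically met.

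First I would assume for contradiction that $\cF = \{F_1, \ldots, F_k\}$ is a set of binary MOFS of type $(n;\lambda_0,\lambda_1)$ whose $\Z$-sum $\sum_{t=1}^{k} F_t$ has every entry equal to some common non-negative integer $s$. By the construction of derived blocks introduced in Section~\ref{MOFSareDKs}, the block $B_{i,j}$ is precisely the set of indices $t$ for which $F_t(i,j) = 1$; consequently $|B_{i,j}| = \sum_{t} F_t(i,j) = s$ for every cell $(i,j)$, so the derived DK-design has constant block size $s$.

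Next I would invoke the preceding lemma to force $\lambda_1 \in \{0,n\}$, which contradicts the hypothesis $\{\lambda_0, \lambda_1\} \cap \{0,n\} = \emptyset$. The degenerate possibility $s=0$ is harmlessly absorbed, since it corresponds to $\lambda_1 = 0$.

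Because the preceding lemma packages all of the BIBD parameter algebra (the combination of $VR = BK$ and $\Lambda(V-1) = R(K-1)$ with $V = k$, $B = n^2$, $R = n\lambda_1$, $\Lambda = \lambda_1^2$), the proof is essentially a one-step reduction and I do not anticipate any substantive obstacle; the only point worth flagging is that a sum being constant in $\Z$ is strictly stronger than being constant modulo some $w$, so this theorem is fully consistent with the excerpt's promise of examples of sets of binary MOFS with constant $\Z_w$-sum.
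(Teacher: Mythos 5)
Your proof is correct and takes essentially the same route as the paper: a constant $\Z$-sum forces the derived \DK-design to have constant block size, and the preceding lemma then yields $\lambda_1\in\{0,n\}$, contradicting the hypothesis. There are no substantive differences from the paper's argument.
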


On the basis of \tref{t:constZ}, it is fair to say that there are no
interesting examples of sets of binary \MOFS\ of pure type with constant
$\Z$-sum. Luckily for us, there are such examples with constant
$\Z_w$-sum.  In the next section we will see how some of them can be
used to build maximal sets of \MOFS\ (cf.~our comments after
\Cref{cor-block-structure} about that result not applying). So for the
remainder of this section we provide some examples of sets of \MOFS\ with
constant $\Z_w$-sum.

Example 3.2 in \cite{Stinson66} displays a set of
$4$-$\MOFS(8;4,4)$ which has constant $\Z_2$-sum. It is not
type-maximal, because each of its members is orthogonal to
\[
\left[
\begin{array}{cc}
0&J_4\\
J_4&0\\
\end{array}
\right].
\]
We next give some examples of sets of MOFS with constant $\Z_3$-sum. In
each case the sets of MOFS are given in superimposed form. We start with
a set of $6$-$\MOFS(5;3,2)$ which is maximal %[[NOT JUST type-max]]
\[
\left[\begin{array}{ccccc}
000000&000111&001011&110100&111000\\
010101&000000&011100&101010&100011\\
101001&101100&000000&010011&010110\\
100110&011010&110001&000000&001101\\
011010&110001&100110&001101&000000\\
  \end{array}\right]
\]
and a set of $8$-$\MOFS(6;3,3)$, which is also maximal,
\[
\left[\begin{array}{cccccc}
00000011&00001100&00111101&11110100&11010011&11101010\\
00010100&01110011&01001000&10101011&11100101&10011110\\
01001111&01100000&10110110&10010000&00111011&11001101\\
10111001&10101110&10000001&01000010&01011110&01110101\\
11111000&11011001&11000111&00101111&00100100&00010010\\
11100110&10010111&01111010&01011101&10001000&00100001\\
  \end{array}\right].
\]
However, not all sets of MOFS with constant $\Z_3$-sum are
maximal. Here is a set of $7$-$\MOFS(6;3,3)$
\[
\left[\begin{array}{cccccc}
1111111&0000001&0000010&0011110&1101100&1110001\\
0000100&0010111&0101101&1011001&1101010&1110010\\
0100000&0111001&1010110&0101011&1000111&1011100\\
0111010&1001110&1011001&1000000&0110101&0100111\\
1010011&1101010&0111100&1100101&0010000&0001111\\
1001101&1110100&1100011&0110110&0011011&0001000\\
  \end{array}\right]
\]
which is not even type-maximal, because it
can be extended by
\[\left[\begin{array}{cccccc}
    0&0&0&1&1&1\\
    0&1&1&0&0&1\\
    1&1&1&0&0&0\\
    1&1&0&1&0&0\\
    0&0&0&1&1&1\\
    1&0&1&0&1&0\\
\end{array}\right].
\]

%[[Can we find an example which is type-max but not max??]]

\section{Maximal sets of MOFS from Doehlert-Klee designs}\label{MaxMOFSfromDKs}

Our strategy in this section will be to use \DK-designs with block
sizes constant modulo $w$. These can be used to create sets of type-maximal
binary MOFS in conjunction with \Cref{cor-block-structure}, thanks to
the following lemma.

\begin{lemma}\label{addingones}
  Let $\cF=\{F_1,F_2,\dots,F_k\}$ be a set of binary
  \MOFS-$(n;\lambda_{0},\lambda_{1})$ with $\Z$-sum $M$.  Then there
  exists a set $\cF'=\{F_1',F_2',\dots,F_k'\}$ of binary
  \MOFS-$(n+k\lambda_1;n+(k-1)\lambda_1,\lambda_1)$ with $\Z$-sum
   \begin{equation}\label{block-matrixMOOJ}
     \left[ \begin{array}{cc}
 	 M & 0 \\
 	 0 & J_{k\lambda_1}
       \end{array}\right].
   \end{equation}
\end{lemma}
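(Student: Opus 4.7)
The plan is to construct each $F_t'$ by bordering $F_t$ with zero blocks and placing a carefully chosen $k\lambda_1 \times k\lambda_1$ binary matrix $A_t$ in the bottom-right corner, so that
\[
F_t' = \begin{bmatrix} F_t & 0 \\ 0 & A_t \end{bmatrix}.
\]
The prescribed $\Z$-sum \eref{block-matrixMOOJ} forces $\sum_{t=1}^k A_t = J_{k\lambda_1}$, and in particular every $A_t$ must be a $(0,1)$-matrix. For $F_t'$ to have the stated type, each row and column of $A_t$ must contain exactly $\lambda_1$ ones, because the first $n$ rows and columns already receive their full quota of ones from $F_t$ itself, while the last $k\lambda_1$ rows and columns draw all their ones from $A_t$.

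First I would construct the $A_t$. A clean way is to start from any $k\times k$ Latin square (the Cayley table of $\Z_k$ will do) and let $P_t$ be the permutation matrix that marks the cells containing symbol $t$; then $\sum_t P_t = J_k$. Dilating each $P_t$ by a factor of $\lambda_1$, i.e.\ replacing every $1$ by $J_{\lambda_1}$ and every $0$ by a $\lambda_1\times \lambda_1$ zero block, produces $A_t$. By construction, every $A_t$ has exactly $\lambda_1$ ones in every row and column, and $\sum_t A_t = J_{k\lambda_1}$, as required.

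Next I would check orthogonality. For distinct $s,t$, the number of cells in which $F_s'$ and $F_t'$ both contain a one splits over the four blocks. The two off-diagonal blocks contribute nothing. The top-left contributes $\lambda_1^2$ by orthogonality of $F_s$ and $F_t$. The bottom-right contributes $0$ because $A_s$ and $A_t$ have disjoint supports (being summands in a $(0,1)$-decomposition of $J_{k\lambda_1}$). The total $\lambda_1^2$ is exactly the count required for orthogonality of binary frequency squares of type $(n+k\lambda_1;\, n+(k-1)\lambda_1,\, \lambda_1)$.

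Finally the $\Z$-sum is computed blockwise: the top-left block sums to $M$ by definition, the two off-diagonal blocks to $0$, and the bottom-right to $J_{k\lambda_1}$ by construction. There is no real obstacle in this argument; the only substantive ingredient is the decomposition of $J_{k\lambda_1}$ into $k$ matrices with $\lambda_1$ ones per row and column, which is a completely standard fact (it simply asks for a resolvable decomposition of $K_{k\lambda_1,k\lambda_1}$ into $\lambda_1$-regular bipartite spanning subgraphs, furnished by dilating any Latin square of order $k$).
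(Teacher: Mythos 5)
Your proposal is correct and follows essentially the same route as the paper: both construct $F_t'$ as a block-diagonal matrix whose bottom-right blocks are the biadjacency matrices of a decomposition of $K_{k\lambda_1,k\lambda_1}$ into $k$ spanning $\lambda_1$-regular subgraphs (the paper simply asserts this decomposition exists, while you exhibit one explicitly by dilating the permutation matrices of a Latin square of order $k$). Your orthogonality check via disjoint supports of the $A_t$ is the same implicit argument the paper relies on.
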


 \begin{proof}
It is a simple matter to decompose the complete bipartite graph
$K_{k\lambda_1,k\lambda_1}$ into $k$ spanning $\lambda_1$-regular
subgraphs. The biadjacency matrices of such graphs give us frequency
squares $F_1''$, $F_2'',\dots,F_k''$ of type
$(k\lambda_1;(k-1)\lambda_1,\lambda_1)$ whose $\Z$-sum is
$J_{k\lambda_1}$.  Then for each $i\in \{1,2,\dots,k\}$ we
define $F_{i}'$ to be
\begin{equation*}
  \left[ \begin{array}{cc}
      F_i & 0 \\
      0 & F_{i}''
    \end{array}\right].\qedhere
\end{equation*}
\end{proof}

The next lemma describes some conditions on \DK-designs
which are sufficient to construct sets of type-maximal MOFS.

\begin{lemma}\label{dk2typemax} 
  Suppose there exists a \DK$(\lambda_1n,\lambda_1^2)$-design $\cD$ on
  $k$ points with block size set $\cK$ such that the blocks have
  orthogonal equipartitions of type $n^n$ where each point occurs
  $\lambda_1$ times in each part.  Suppose furthermore that there is
  some constant $w$ such that
  \begin{enumerate}
  \item
    each element of $\cK$ is congruent to $w-1$ modulo $w$;
  \item either $\lambda_1$ or $n+(k-1)\lambda_1$ is coprime to $w$;
  \item $n\not\equiv0\bmod w$.
  \end{enumerate} 
  Then there exists a type-maximal set $\cF$ of binary
  $k$-\MOFS$(n+k\lambda_1;n+(k-1)\lambda_1,\lambda_1)$.
\end{lemma}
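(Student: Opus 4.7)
\medskip
\noindent\textbf{Proof plan.}
The plan is to assemble the conclusion by chaining \tref{dk2mofs}, \lref{addingones}, and \Cref{cor-block-structure}; the three hypotheses of the lemma are tailored exactly so that this chain works.

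First, I would apply \tref{dk2mofs} to the hypothesised \DK-design $\cD$, producing a set $\cF_0$ of binary $k$-$\MOFS(n;n-\lambda_1,\lambda_1)$. Next, I would observe the key consequence of hypothesis~(1): the entry in cell $(i,j)$ of the $\Z$-sum of $\cF_0$ equals $|B_{i,j}|$, where $B_{i,j}$ is the unique block in $R_i\cap C_j$, and every such block size lies in $\cK$. Since every element of $\cK$ is $\equiv w-1\pmod w$, the $\Z_w$-sum of $\cF_0$ is the constant matrix $(w-1)J_n$. Call this $\Z$-sum $M$, so $M\equiv (w-1)J_n\pmod w$.

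Now I would feed $\cF_0$ into \lref{addingones} to get a set $\cF$ of binary $k$-$\MOFS(N;N-\lambda_1,\lambda_1)$, where $N:=n+k\lambda_1$, whose $\Z$-sum has the block form
\[
\begin{bmatrix} M & 0 \\ 0 & J_{k\lambda_1}\end{bmatrix}.
\]
Reducing modulo $w$ gives the block structure required by \tref{thm-block-structure} with parameters $a=b=n$ (relative to the new order $N$), $x_1=w-1$, $x_2=x_3=0$, $x_4=1$. The admissibility condition $x_1+x_4\equiv x_2+x_3\pmod w$ holds since $(w-1)+1\equiv 0\pmod w$.

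Finally, I would apply \Cref{cor-block-structure} to $\cF$. Hypothesis~(2) of the lemma says that $\lambda_1$ or $N-\lambda_1=n+(k-1)\lambda_1$ is coprime to $w$, so the coprimality precondition of the corollary is met. The non-congruence \eref{cor-(1,1)count} to be verified becomes
\[
k\lambda_1 \not\equiv n(0-1)+n(0-1)+N\cdot 1 \equiv -2n+(n+k\lambda_1) \equiv k\lambda_1-n \pmod w,
\]
which simplifies to $n\not\equiv 0\pmod w$, exactly hypothesis~(3). Hence $\cF$ is type-maximal, completing the proof. The argument is essentially bookkeeping; the only substantive step is recognising that the three hypotheses are designed precisely to drive the corollary, and the computation above is the place where I would be careful not to confuse the new order $N$ with the original $n$.
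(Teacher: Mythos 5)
Your proposal is correct and follows essentially the same route as the paper: derive the MOFS via \tref{dk2mofs}, note the constant $\Z_w$-sum $(w-1)J_n$ from hypothesis~(1), pad with \lref{addingones}, and apply \Cref{cor-block-structure} with $a=b=n$, $x_1=w-1$, $x_2=x_3=0$, $x_4=1$, reducing the non-congruence to $n\not\equiv 0\pmod w$. Your care in distinguishing the new order $N=n+k\lambda_1$ from $n$, and your explicit check of $x_1+x_4\equiv x_2+x_3\pmod w$, match the paper's computation exactly.
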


\begin{proof}
Let $\cF=\{F_1,F_2,\dots,F_k\}$ be the set derived from $\cD$.  From
\tref{dk2mofs}, $\cF$ is a set of binary
$k$-$\MOFS(n;n-\lambda_1,\lambda_1)$.  From the given properties of
$\cK$, the $\Z_w$-sum of $\cF$ is equal to $(w-1)J_{n}$. Let
$n'=n+k\lambda_1$ and $\lambda_0=n'-\lambda_1$.  From
\lref{addingones}, there exists a set $\cF'$ of binary
$k$-\MOFS$(n';\lambda_0,\lambda_1)$ whose $\Z_w$-sum is equal to
\begin{equation*}
  \left[ \begin{array}{cc}
      (w-1)J_{n} & 0 \\
      0 & J_{k\lambda_1}
    \end{array}\right].
\end{equation*} 
By Condition {\em 2}, at least one of $\lambda_0$, $\lambda_1$ is
coprime to $w$.  With the aim to apply \Cref{cor-block-structure}, set
$x_1=w-1$, $x_2=x_3=0$, $x_4=1$ and $a=b=n$. Then we have
\[
a(x_2-x_4)+b(x_3-x_4)+n'x_4-k\lambda_1
=-2n+(n+k\lambda_1)-k\lambda_1=-n.
\] 
Since $n\not\equiv0\bmod w$, Equation \eref{cor-(1,1)count} is
satisfied.  Thus $\cF$ is type-maximal.
\end{proof}

Next we give an example of an application of \lref{dk2typemax}
with $w=3$, $n=13$, $\lambda_1=4$ and $k=8$.
We construct a \DK$(52,16)$-design on point set $\Z_8$ 
with all block sizes congruent to $2 \bmod 3$. 
We find orthogonal equipartitions of its 169 blocks as follows.
Divide the quadrants of a $13\times 13$ array into 
$Q_1$ (a $5\times 5$ array),
$Q_2$ (a $5\times 8$ array),
$Q_3$ (an $8\times 5$ array) 
and
$Q_4$ (an $8\times 8$ array).
In $Q_1$ we place the following blocks, each $5$ times, once per row
and once per column:
$$\{0,1,2,3,4,5,6,7\},\{0,4\},\{1,5\},\{2,6\},\{3,7\}.$$
In $Q_2$, let the first column be given by the blocks: 
$$\{4,7\},\{0,5\},\{3,5\},\{1,2\},\{6,7\}.$$
Then increment these modulo $8$ to obtain the remaining columns of $Q_2$. 
In $Q_3$, let the first row be given by the blocks: 
$$\{2,5\},\{4,6\},\{0,3\},\{2,7\},\{1,7\}.$$
Then increment these modulo $8$ to obtain the remaining rows of $Q_3$. 
In $Q_4$, let the first row be given by the blocks: 
$$\{1,2,3,4,5\},\{5,6,7,0,1\},\{0,4\},\{4,6\},\{2,3\},\{5,7\},\{1,6\},\{0,3\}.$$
Then increment these modulo $8$ along diagonals to obtain the
remaining rows of $Q_4$. It is routine to check that we have constructed the
claimed \DK$(52,16)$-design.
Thus, by \lref{dk2typemax}, there exists a type-maximal set of
$8$-$\MOFS(45;41,4)$.

Now, if there exists a set of $k$-$\MOFS(n;\lambda_0,\lambda_1)$ then
for any positive integer $q$, there also exists a set of
$k$-$\MOFS(nq;\lambda_0q,\lambda_1q)$, which can be obtained by
dilations. In particular, by dilating the set of $8$-$\MOFS(13;9,4)$ derived
from the $13\times13$ array above, we can obtain a set of
$8$-$\MOFS(13q;9q,4q)$, for any integer $q$.  In turn, using
\lref{dk2typemax}, for any $q$ not divisible by $3$
there exists a type-maximal set of $8$-$\MOFS(45q;41q,4q)$.
This is despite the fact that the original set of $8$-$\MOFS(13;9,4)$
are not type-maximal, as evidenced by the following frequency square,
which can be used to extend it to a set of $9$-$\MOFS(13;9,4)$:
\[\left[\begin{array}{ccccccccccccc}
0&1&1&1&1&0&0&0&0&0&0&0&0\\
0&1&1&1&1&0&0&0&0&0&0&0&0\\
0&1&1&1&1&0&0&0&0&0&0&0&0\\
0&0&0&1&1&0&0&0&0&0&1&0&1\\
1&0&0&0&0&0&0&0&0&1&0&1&1\\
1&0&0&0&0&0&0&0&0&1&1&1&0\\
1&0&1&0&0&0&0&0&0&0&0&1&1\\
0&0&0&0&0&0&0&0&0&1&1&1&1\\
1&1&0&0&0&0&0&0&0&1&1&0&0\\
0&0&0&0&0&1&1&1&1&0&0&0&0\\
0&0&0&0&0&1&1&1&1&0&0&0&0\\
0&0&0&0&0&1&1&1&1&0&0&0&0\\
0&0&0&0&0&1&1&1&1&0&0&0&0\\
  \end{array}\right]
\]
We also note that when $q$ is a large power of $2$ our
set of $8$-$\MOFS(45q;41q,4q)$ are much smaller type-maximal sets than those
given by Corollary 9 from \cite{MOFS-2021}.

%[[Can we find a FS of type (45,42,3) orthogonal to the above, to show that
%it is not maximal even though it is type-maximal?]]

\section{An approach using cyclic frequency squares}\label{cyclic}

In the previous section we were able to find two equipartitions of a
\DK-design with the required properties in the case when
most of the blocks were of size $2$; in other cases finding
such equipartitions may be harder. In this section, we instead
consider when sets of MOFS are cyclically generated, using \DK-designs
to determine the first row only.

\begin{lemma}\label{lem-DK}
Suppose there exists a \DK$(R,\Lambda)$-design $\cD$ on $V$ points,
with B blocks and with block size set $\cK$ such that each element of
$\cK$ is congruent to $x$ modulo $w$ for some constants $x$ and $w$.
Then there exists a set $\cF$ of $V$-\MOFS$(B+VR;B+VR-R,R)$ whose
$\Z_w$-sum is equal to
\begin{equation}\label{block-matrixx00J}
\left[ \begin{array}{cc}
xJ_{B} & 0 \\
0 & J_{VR}
\end{array}\right].  
\end{equation}
\end{lemma}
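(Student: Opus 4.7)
The plan is to build each $F_v$ as a block-diagonal matrix $F_v=A_v\oplus C_v$, where $A_v$ is a $B\times B$ circulant recording the incidence of the point $v$ with the blocks of $\cD$, and $C_v$ is a $VR\times VR$ $(0,1)$-matrix drawn from a partition of the all-ones $VR\times VR$ matrix into $V$ disjoint pieces, each with $R$ ones per row and column. The block-diagonal shape is essentially forced by the target $\Z_w$-sum in \eref{block-matrixx00J}, whose off-diagonal quadrants vanish. What remains is to check that each $F_v$ has the right type, that $\cF$ is pairwise orthogonal, and that the two diagonal blocks of the $\Z_w$-sum come out to $xJ_B$ and $J_{VR}$ respectively.

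For the first block I would fix any ordering $B_0,\dots,B_{B-1}$ of the blocks of $\cD$ and set $(A_v)_{ij}=1$ iff $v\in B_{(j-i)\bmod B}$. Since $v$ lies in exactly $R$ blocks, every row and column of $A_v$ has $R$ ones. For distinct $u,v\in\cV$, each value $k\in\{0,\dots,B-1\}$ of $(j-i)\bmod B$ is attained at exactly $B$ cells, so
\[
\sum_{i,j}(A_u)_{ij}(A_v)_{ij}=B\cdot\bigl|\{k:\{u,v\}\subseteq B_k\}\bigr|=B\Lambda=R^2
\]
by the DK identity $R^2=\Lambda B$. Similarly $(\sum_v A_v)_{ij}=|B_{(j-i)\bmod B}|\equiv x\pmod w$ by the hypothesis on $\cK$, so $\sum_v A_v\equiv xJ_B\pmod w$.

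For the second block, partition the $VR\times VR$ grid into a $V\times V$ array of $R\times R$ sub-blocks, fix any $V\times V$ Latin square $L$ on the symbol set $\cV$, and take $C_v$ to be the $(0,1)$-matrix whose $R\times R$ sub-block equals $J_R$ in the cells where $L$ takes the value $v$ and is zero elsewhere. Then each $C_v$ has $R$ ones per row and per column, the supports of distinct $C_u$ and $C_v$ are disjoint, and $\sum_v C_v=J_{VR}$ in $\Z$.

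Assembling $F_v:=A_v\oplus C_v$ gives an $n\times n$ binary array with $n=B+VR$ having $R$ ones per row and column, hence of type $(n;n-R,R)$. For $u\neq v$ the overlap decomposes across the two diagonal blocks as $R^2+0=R^2=\lambda_1^2$, so the squares are mutually orthogonal, and the $\Z_w$-sum has the stated block form by construction. I do not foresee a substantive obstacle: once the block-diagonal ansatz is adopted, the circulant structure of $A_v$ interlocks cleanly with the DK identity $R^2=\Lambda B$, and any Latin square supplies the $C_v$ portion.
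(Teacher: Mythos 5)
Your proof is correct and follows essentially the same route as the paper: the circulant $A_v$ built from the $v$-th row of the incidence matrix is exactly the paper's $F_i$, and the overlap count $B\Lambda=R^2$ is the same key step. The only cosmetic difference is that you construct the $J_{VR}$ padding explicitly via a Latin square blown up by $J_R$, whereas the paper delegates this to Lemma~\ref{addingones} (a decomposition of $K_{VR,VR}$ into $R$-regular spanning subgraphs, of which your construction is one instance).
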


\begin{proof}
Let $I$ be the $V\times B$ incidence matrix for $\cD$.  By
definition, the sum of each row of $I$ is $R$ and the sum of each
column is congruent to $x$ modulo $w$.  Furthermore, the dot product of
any two rows of $I$ is $\Lambda$. 
 
Let $F_i$ be the frequency square of type $(B;B-R,R)$ whose first row
is the $i$-th row of $I$ and each subsequent row is formed by
cyclically shifting the previous row one place to the right.
Superimposing $F_i$ and $F_j$, where $i\neq j$, results in $B\Lambda$
occurrences of the pair $(1,1)$.  By the definition of a \DK-design,
$B\Lambda = R^2$.  It follows that $\{F_1,F_2,\dots,F_V\}$
is a set of binary MOFS of type $(B;B-R,R)$, with $\Z_w$-sum given
by $xJ_{B}$.

The result then follows from \lref{addingones}.
\end{proof}

Using \lref{lem-DK} and \tref{thm-block-structure}, we now have another
method of creating type-maximal sets of MOFS.

\begin{theorem}\label{t:puttingittogether}
  Suppose there exists a \DK$(R,\Lambda)$-design on
  $V$ points with $B$ blocks and block size set $\cK$, such that 
  each element of $\cK$ is congruent to $w-1\bmod w$ for some
  constant $w$.  Furthermore, suppose that at least one of $RB$ or
  $B^2$ is not divisible by $w$.  Then there exists a type-maximal set
  of $V$-\MOFS$(B+VR;B+VR-R,R)$.
\end{theorem}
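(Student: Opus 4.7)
The plan is to apply \lref{lem-DK} with $x = w-1$ to manufacture a candidate set of \MOFS, and then invoke \tref{thm-block-structure} directly (rather than through \Cref{cor-block-structure}) to rule out any extension of the same type. First, \lref{lem-DK} with $x = w-1$ yields a set $\cF$ of binary $V$-\MOFS$(n';n'-R,R)$, where $n' = B+VR$, whose $\Z_w$-sum has the form \eref{block-matrixx00J} with $x_1 = w-1$, $x_2 = x_3 = 0$ and $x_4 = 1$. This matches the block structure \eref{block-matrixJ} of \tref{thm-block-structure} with $a = b = B$ and $n = n'$, and the congruence $x_1 + x_4 \equiv x_2 + x_3 \bmod w$ holds since $(w-1)+1 \equiv 0 \bmod w$.

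Next I would pin down the residues of the frequency parameters. Double-counting point-block incidences in the \DK-design gives $\sum_{b\in\cB}|b| = VR$, and since every block size is congruent to $-1$ modulo $w$, this same sum is also $\equiv -B \bmod w$. Hence $n' \equiv 0 \bmod w$, so $\lambda_0 = n'-R \equiv -R \bmod w$ while $\lambda_1 = R$; both frequencies reduce to $\pm R$ modulo $w$.

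Now suppose for contradiction that a binary $F$ of type $(n';n'-R,R)$ extends $\cF$. Substituting our parameters into \eref{(1,1)count}, the right-hand side is $-B\mu_i - B\mu_i + n'\mu_i = (VR-B)\mu_i$ while the left-hand side is $VR\mu_i$, so the equation collapses to $B\mu_i \equiv 0 \bmod w$ for each $i \in\{0,1\}$. Using $\mu_0 \equiv -R$ and $\mu_1 = R$ modulo $w$, both requirements reduce to the single condition $RB \equiv 0 \bmod w$.

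The remaining step, and the main (though modest) obstacle, is deriving a contradiction from the hypothesis that $w \nmid RB$ or $w \nmid B^2$. If $w\nmid RB$ we are done immediately. If instead $w\nmid B^2$, I would invoke the defining \DK-identity $R^2=\Lambda B$ to write $R^2B = \Lambda B^2$; since $w\nmid B^2$ we get $w\nmid R^2B$, and therefore $w\nmid RB$ too (for if $w\mid RB$ then $w\mid R\cdot RB = R^2 B$). Either case contradicts the requirement $RB \equiv 0 \bmod w$, so $\cF$ is type-maximal. It is worth flagging why \Cref{cor-block-structure} cannot be used as a black box: it demands $\gcd(\lambda_0,w)=1$ or $\gcd(\lambda_1,w)=1$, which need not hold under our weaker hypothesis, so the argument must proceed from \eref{(1,1)count} directly and exploit the coincidence that the $i=0$ and $i=1$ instances collapse to the same congruence.
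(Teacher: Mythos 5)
Your construction and reduction match the paper's proof almost exactly: both apply \lref{lem-DK} with $x=w-1$ and then feed $x_1=w-1$, $x_2=x_3=0$, $x_4=1$, $a=b=B$ into \tref{thm-block-structure} to conclude that any extending square of the same type forces $B\mu_i\equiv 0\bmod w$, hence $RB\equiv 0\bmod w$. The place where you diverge is the last step, and there your argument contains a genuine error. You claim that $w\nmid B^2$ implies $w\nmid R^2B$ because $R^2B=\Lambda B^2$; but $w\nmid B^2$ does not imply $w\nmid \Lambda B^2$ when $\gcd(\Lambda,w)>1$. The paper's own large example in Section~\ref{largeexample} is a counterexample to your inference: there $w=8$, $\Lambda=2z$ with $z$ odd, and $B\equiv 2\bmod 4$, so $B^2\equiv 4\bmod 8$ (hence $w\nmid B^2$) while $R^2B=\Lambda B^2=8zm^2\equiv 0\bmod 8$. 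So as written, your case ``$w\nmid B^2$'' does not yield a contradiction.

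The gap is easily repaired, and the repair is already latent in your own computation. You established $VR\equiv -B\bmod w$, so $B^2\equiv -V\cdot RB\bmod w$; therefore $w\mid RB$ forces $w\mid B^2$, and the contrapositive handles the case $w\nmid B^2$ without any appeal to $R^2=\Lambda B$. The paper reaches the same conclusion slightly differently: rather than reducing $\mu_0$ modulo $w$, it substitutes $\mu_0=B+VR-R$ directly into $\mu_0 B\equiv 0$ to get $B^2+(V-1)RB\equiv 0$, which together with $RB\equiv 0$ gives $B^2\equiv 0$, contradicting the hypothesis that one of $RB$, $B^2$ is not divisible by $w$. Your side remark about why \Cref{cor-block-structure} cannot be invoked as a black box is correct and worth keeping; just replace the $\Lambda$-based divisibility step with the congruence $B\equiv -VR\bmod w$ that you already derived.
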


\begin{proof}
  Aiming for a contradiction, suppose that the set of $V$-MOFS $\cF$ in
  \eref{block-matrixx00J} are not type-maximal. So suppose there
  exists a frequency square $F$ of type $(n;\lambda_0,\lambda_1)$ such
  that $\cF \cup \{F\}$ is a set of $(V+1)$-MOFS.  We apply
  \tref{thm-block-structure} with $x_1=w-1$, $x_2=x_3=0$, $x_4=1$,
  $a=b=B$, $k=V$, $n=B+VR$, $\mu_1=\lambda_1=R$ and
  $\mu_0=\lambda_0=n-R$.  Then \eref{(1,1)count} implies that:
  \[
  \mu_iRV\equiv -2\mu_iB+ \mu_i(B+VR) \bmod{w}, 
  \]
  which simplifies to $\mu_iB\equiv 0\bmod{w}$.
  Since $\mu_1=R$ it follows that $RB\equiv 0\bmod{w}$.
  However, we also have $\mu_0=B+VR-R$, which means that
  \[
  0\equiv(B+VR-R)B=B^2+(V-1)RB\equiv B^2\bmod w.
  \]
  The result follows.   
\end{proof}

It will never make sense to apply \tref{t:puttingittogether} to a
design with empty blocks, as this forces $w=1$.  This observation
rules out many classical constructions for \DK-designs (see, for
example, \cite{MSV76} and \cite{SV77c}) as candidates for yielding maximal sets of MOFS via
\tref{t:puttingittogether}.

However, for $w=2$ we can give a concrete example.  We begin with a
cyclic PBD on $27$ points with starter blocks in $\Z_{27}$ of
$\{0,1,6,13,24\}$ and $\{0,2,10\}$.  By adding all the 27 singletons
from $\Z_{27}$, we obtain a \DK-design $(\cV,\cB)$ with $V=27$,
$B=81$, $\Lambda=1$ and $R=9$.  Setting $w=2$, from
\tref{t:puttingittogether} there exists a type-maximal set of
$27$-\MOFS$(324;315,9)$.

Note that \tref{type-max} gives us a type-maximal set of
$35$-\MOFS$(324;315,9)$, using $n=324$, $d=36$, and $\lambda=9$.

\subsection{Large examples}\label{largeexample} 

For $w>2$ (specifically $w=8$), we have been able to find large
examples satisfying \tref{t:puttingittogether}, which we exhibit in
this subsection. Specifically, we show the following.

\begin{lemma}
Let $V=12615$ and for sufficiently large odd $g$, let $z=15g^2V$.  Let
$R=3074z$ and $B=4724738z$.  Then there exists a type-maximal
set of $V$-\MOFS$(B+VR;B+VR-R,R)$.
\end{lemma}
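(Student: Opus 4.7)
The plan is to invoke \tref{t:puttingittogether} with $w=8$. That theorem reduces the problem to constructing a \DK$(R,\Lambda)$-design on $V=12615$ points with $B$ blocks, every block size congruent to $7\bmod 8$, and at least one of $RB$ or $B^2$ not divisible by $8$; the desired type-maximal set of $V$-\MOFS$(B+VR;B+VR-R,R)$ then follows automatically.

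The arithmetic checks are routine. Since $V=3\cdot 5\cdot 29^2\equiv 7\bmod 8$, $g^2\equiv 1\bmod 8$ (as $g$ is odd), $3074=2\cdot 29\cdot 53\equiv 2\bmod 8$ and $4724738=2\cdot(29\cdot 53)^2\equiv 2\bmod 8$, one finds $z\equiv 1\bmod 8$ and $R\equiv B\equiv 2\bmod 8$, so $RB\equiv 4\not\equiv 0\bmod 8$. A short computation shows $R^2=(2z)B$, so $\Lambda:=2z$ is an integer and satisfies $R^2=\Lambda B$, which is the \DK\ consistency condition.

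The substantive step is constructing the \DK-design, and this is where I expect the main obstacle to lie. The approach is to use a point-regular PBD on $V$ points with block sizes drawn from the residue class $\{7,15,23,\ldots\}$ modulo $8$. Restricting to block sizes $\{7,15\}$ for concreteness, the two global equations $\sum_b|b|=VR$ and $\sum_b\binom{|b|}{2}=\Lambda\binom{V}{2}$ uniquely determine the number of blocks of each size, and both numbers turn out to be non-negative integer multiples of $z$. To realise such a design with point-regularity automatically guaranteed, I would build it via a $(V,\{7,15\},2z)$-difference family in $\Z_V\cong\Z_3\oplus\Z_5\oplus\Z_{29^2}$: starting from base blocks at some small index, then invoking asymptotic existence results for difference families (of the Wilson or Buratti--Pasotti type) to hit the exact index $2z=30g^2V$ for all sufficiently large odd $g$. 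Achieving the required orbit counts exactly — rather than only asymptotically — is the technical heart of the argument; the multiplicative freedom from taking $g$ large and, if necessary, divisible by appropriate powers of $3$, $5$, or $29$ is what makes exact matching possible.

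Once the \DK-design has been constructed, \tref{t:puttingittogether} applied with $w=8$ produces the claimed type-maximal set of $V$-\MOFS$(B+VR;B+VR-R,R)$, completing the proof.
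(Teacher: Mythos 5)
Your reduction to \tref{t:puttingittogether} with $w=8$ is exactly right, and your arithmetic ($R\equiv B\equiv 2\bmod 8$, hence $RB\equiv 4\not\equiv 0\bmod 8$, and $\Lambda=2z$ satisfies $R^2=\Lambda B$) matches the paper's. The gap is that you never actually construct the \DK$(R,2z)$-design: you reduce the problem to finding a point-regular \PBD\ on $V$ points with block sizes in $\{7,15\}$, index $2z$ and prescribed block counts, and then defer the construction to ``asymptotic existence results for difference families (of the Wilson or Buratti--Pasotti type)'', while conceding that ``achieving the required orbit counts exactly\ldots is the technical heart of the argument''. That heart is missing, and the results you invoke do not obviously supply it: Wilson-type asymptotic existence theorems operate in the regime where the number of points tends to infinity with block sizes and index fixed, whereas here $V=12615$ is fixed and the index $2z=30g^2V$ tends to infinity. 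The existence of a $(V,\{7,15\},2z)$-difference family in $\Z_{12615}$ with exactly $B_7/V$ base blocks of size $7$ and $B_{15}/V$ of size $15$ is precisely the kind of claim that needs a proof or a precise citation, and you give neither.

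The paper fills this hole with a concrete construction that also explains the otherwise mysterious form $z=15g^2V$: it applies Montgomery's theorem (\tref{t:Mon}) to decompose the complete multipartite graph with $V$ parts of size $g$ into copies of $K_{15}$ for all sufficiently large $g$, collapses each part to a point and takes $30$ copies of each block to obtain a $(V,15,\cdot)$-design, trades suitable triples of repeated $15$-blocks for $15$ blocks of size $7$ via a $(15,7,3)$-design so as to hit the exact counts $B_7$ and $B_{15}$, and finally symmetrises under a transitive group action on the point set to force point-regularity. If you wish to keep the difference-family route, you would need to prove the exact-index existence statement yourself; as written, your proposal establishes the routine half of the lemma and only gestures at the substantive half.
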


Let $\Lambda=2z$, $w=8$, $B_{15}=713168z$ and $B_7=4011570z$ and observe
that: $B=B_{15}+B_7$, $R^2=\Lambda B$ and $RB \not\equiv 0\bmod w$.
From \tref{t:puttingittogether}, it suffices to show that there exists
a \DK$(R,\Lambda)$-design on $V$ points with $B$ blocks and block size
set $\{7,15\}$, where there are $B_i$ blocks of size $i$ for each
$i\in \{7,15\}$.
  
Such a \DK-design is in fact also a point-regular
PBD$(V,\{7,15\},\Lambda)$, so we require that $VR=7B_7+15B_{15}$ and
$\Lambda V(V-1)= B_{15}\times 15\times 14 + B_7\times 7\times 6$; it
may be observed that these all hold.  To construct such a PBD, we make
use of Corollary 1.4 of \cite{Mon19}, which states:

\begin{theorem}\label{t:Mon} 
  Let $F$ be a graph with chromatic number $\chi(F) \geq 4$ and let
  $\epsilon>0$. Any sufficiently large $F$-divisible graph $G$
  with minimum degree $\delta(G) \geq (1-1/(100\chi(F)) + \epsilon)|G|$ has an
  $F$-decomposition.
\end{theorem}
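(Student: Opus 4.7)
The strategy is to apply \tref{t:puttingittogether} with $w=8$. Since $7\equiv 15\equiv w-1\pmod{8}$, the block-size hypothesis holds automatically. With the auxiliary values $\Lambda=2z$, $B_{15}=713168z$ and $B_7=4011570z$ introduced in the discussion preceding the lemma, direct arithmetic verifies $B=B_{15}+B_7$ and $R^2=\Lambda B$; moreover, because $V=3\cdot5\cdot 29^2$ and $15$ are odd, the value $z=15g^2V$ is odd whenever $g$ is odd, and then $R=2\cdot1537\cdot z$ and $B=2\cdot 2362369\cdot z$ give $RB\equiv 4\not\equiv 0\pmod{8}$. It therefore suffices to construct, for sufficiently large odd $g$, a point-regular PBD$(V,\{7,15\},\Lambda)$ on $V=12615$ points with exactly $B_{15}$ blocks of size $15$ and $B_7$ blocks of size $7$. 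Any such PBD is automatically a DK$(R,\Lambda)$-design because the identities $VR=7B_7+15B_{15}$ and $\Lambda V(V-1)=42B_7+210B_{15}$ both hold, yielding the correct replication number and pair-count.

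Such a PBD corresponds to a decomposition of the $\Lambda$-fold complete multigraph $\Lambda K_V$ into cliques of orders $7$ and $15$, which I obtain by two successive applications of \tref{t:Mon}. First, to produce the large blocks, I select a spanning subgraph $H\subseteq \Lambda K_V$ of edge-count $\binom{15}{2}B_{15}$ that is $K_{15}$-divisible (each vertex degree divisible by $14$, total edge-count divisible by $105$) and has minimum degree above the threshold $(1-\tfrac{1}{1500}+\epsilon)V$ needed by \tref{t:Mon} (since $\chi(K_{15})=15$). The theorem then yields a $K_{15}$-decomposition of $H$, accounting for all $B_{15}$ size-$15$ blocks. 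Second, the residual multigraph $G=\Lambda K_V\setminus H$ has edge-count $\binom{7}{2}B_7$; the parameter identities above force each of its vertex degrees to be divisible by $6$ and its edge total by $21$, so $G$ is $K_7$-divisible. A second application of \tref{t:Mon} (with the gentler threshold $1-\tfrac{1}{700}+\epsilon$) decomposes $G$ into the remaining $B_7$ size-$7$ blocks.

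The principal obstacle is engineering $H$ so that it and its complement in $\Lambda K_V$ are \emph{simultaneously} divisible and sufficiently dense for \tref{t:Mon}, while also accommodating the fact that Montgomery's theorem is stated for simple graphs rather than multigraphs. This is precisely where the scaling $z=15g^2V$ is decisive: multiplying through by $g^2$ uniformly inflates every edge multiplicity by the square of an odd integer, which simultaneously enforces the required divisibility congruences on $H$ and on $\Lambda K_V\setminus H$ and, crucially, pushes the effective density well beyond Montgomery's threshold. A convenient realisation is to take $H$ as a $g$-fold edge blow-up of a suitable index-$1$ auxiliary structure on $V$ points, reducing the multigraph decomposition to a simple-graph decomposition of a carefully chosen blow-up on which \tref{t:Mon} applies directly. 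Once $H$ is located the two applications of \tref{t:Mon} combine to give the required PBD, and the lemma follows from \tref{t:puttingittogether}.
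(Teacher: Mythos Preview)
Your proposal is not a proof of \tref{t:Mon} at all. In the paper, \tref{t:Mon} is an external result quoted verbatim from Montgomery~\cite{Mon19}; it is stated without proof and then \emph{used} as a tool. What you have written is instead an attempted proof of the unnumbered Lemma that precedes \tref{t:Mon} (the one asserting the existence of a type-maximal set of $V$-\MOFS$(B+VR;B+VR-R,R)$). So as a proof of the displayed statement, the proposal simply addresses the wrong claim.

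Even read as a proof of that Lemma, your argument diverges from the paper's and contains a genuine gap. The paper applies \tref{t:Mon} exactly once, and to a \emph{simple} graph: it takes $G$ to be the complete multipartite graph with $V$ parts of size $g$, checks $K_{15}$-divisibility, decomposes $G$ into copies of $K_{15}$, then collapses each part to a point and takes $30$ copies to obtain a $(V,15,\Lambda')$-design. The size-$7$ blocks are produced not by a second appeal to \tref{t:Mon} but by an explicit local trade: a known $(15,7,3)$-design lets one replace any three parallel copies of a size-$15$ block by fifteen blocks of size $7$. Point-regularity is then forced by developing cyclically over $\Z_V$. Your plan instead tries to carve $\Lambda K_V$ into two pieces $H$ and $\Lambda K_V\setminus H$ and decompose each separately via \tref{t:Mon}. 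The crucial step---actually exhibiting such an $H$ that is $K_{15}$-divisible, whose complement is $K_7$-divisible, and both of which satisfy Montgomery's minimum-degree hypothesis \emph{as simple graphs}---is never carried out. The phrases ``$g$-fold edge blow-up of a suitable index-$1$ auxiliary structure'' and ``carefully chosen blow-up'' are placeholders, not constructions; nothing in the proposal pins down $H$, and \tref{t:Mon} does not apply to multigraphs without further work that you have not supplied.
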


\begin{lemma}
For some sufficiently large $g$, let $z=15g^2V$.  Let $V,$ $\Lambda$,
$B_7$ and $B_{15}$ be defined as above.  Then there exist a
point-regular \PBD$(V,\{7,15\},2\Lambda)$ with $B_7$ blocks of
size $7$ and $B_{15}$ blocks of size $15$.
\end{lemma}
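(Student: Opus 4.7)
The plan is to realise the PBD by a two-stage decomposition of the multigraph $\Lambda K_V$ (where $\Lambda=2z$): first place all $B_{15}$ blocks of size $15$ via an explicit combinatorial construction, and then decompose the residual multigraph into $B_7$ copies of $K_7$ via \tref{t:Mon}. The arithmetic identities $VR=7B_7+15B_{15}$ and $\Lambda\binom{V}{2}=21B_7+105B_{15}$ guarantee that such a decomposition, if it exists, has the correct block counts and can be arranged to be point-regular.

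For Stage~1, I would build a collection $\cR_{15}$ of $B_{15}$ blocks of size~$15$ on $\cV$ that is point-regular --- each point lying in exactly $r_{15}=15B_{15}/V$ blocks --- while using each pair at most $\Lambda$ times. Since $B_{15}$ and all replication numbers scale with $g^2$, one can assemble $\cR_{15}$ from $g^2$ copies of a fixed ``starter'' configuration built from a resolvable or group-divisible design on $V$ points, exploiting the factorisation $V=3\cdot 5\cdot 29^2$ together with a cyclic action (say on $\Z_V$) to enforce point-regularity. Subtracting the pair-occurrences used by $\cR_{15}$ from $\Lambda K_V$ leaves a residual edge-multiset~$L$.

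For Stage~2, since \tref{t:Mon} is stated for simple graphs, I would realise $L$ as a simple graph $\widetilde L$ via an appropriate blow-up on $Vh$ vertices for some $h=h(g)\to\infty$ (so that multi-edges of $L$ become disjoint edges between copies of vertices). The graph $\widetilde L$ inherits $K_7$-divisibility (every vertex-degree divisible by $6$ and $|E(\widetilde L)|$ divisible by $21$) from the arithmetic identities above, and for $g$ large enough satisfies
$$\delta(\widetilde L)/|V(\widetilde L)|\to (V-1)/V = 12614/12615,$$
which comfortably exceeds the threshold $1-1/(100\chi(K_7))+\eps = 1-1/700+\eps$ demanded by \tref{t:Mon}. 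Since $\chi(K_7)=7\ge 4$, \tref{t:Mon} with $F=K_7$ then yields a $K_7$-decomposition of $\widetilde L$, which projects to the required $B_7$ blocks of size~$7$ on $\cV$.

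The hard part will be harmonising the two stages. The $K_{15}$-blocks already consume a substantial fraction of the pair-coverage available in $\Lambda K_V$, so a naive blow-up could leave $\widetilde L$ below the density threshold of \tref{t:Mon}. Thus $h=h(g)$ must grow fast enough with $g$, and $\cR_{15}$ must be arranged sufficiently uniformly across the groups of the blow-up, so that the minimum-degree bound, the $K_7$-divisibility conditions, and point-regularity all hold simultaneously on $\widetilde L$; balancing these requirements is the principal technical challenge.
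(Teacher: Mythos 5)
Your proposal is a plan rather than a proof, and both of its stages have genuine gaps. In Stage~1 you never construct the point-regular collection $\cR_{15}$ of $B_{15}$ blocks of size $15$; you only gesture at ``a resolvable or group-divisible design'' and a cyclic action, and nothing establishes that such a configuration exists with the required replication number, with pair-multiplicities bounded by $\Lambda$, and with a residual that Stage~2 can handle. Stage~2 then fails as stated. The density claim $\delta(\widetilde L)/|V(\widetilde L)|\to(V-1)/V$ is not justified: if $\cR_{15}$ is point-regular then each vertex lies in $15B_{15}/V=848z$ blocks of size $15$, so the residual multigraph $L$ has vertex degrees $\Lambda(V-1)-14\cdot848z=13356z$ out of a possible $25228z$ --- roughly half the pair-multiplicity has been consumed. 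To recover density near $(V-1)/V$ in a simple blow-up you would need all residual multiplicities $m_{uv}$ to be essentially equal, i.e.\ $\cR_{15}$ would have to be pair-balanced, a much stronger (and unproved) property; the common value would be $13356z/(V-1)=18z/17$, an integer only when $17\mid g$. Moreover, blowing each vertex up to $h$ copies and decomposing into copies of $K_7$ inflates the index: each pair is then covered $h\,m_{uv}$ times after projection, so you obtain $hB_7$ blocks of size $7$ and index $h\Lambda$, not the stated parameters. You flag ``harmonising the two stages'' as the principal technical challenge, but that is exactly where the proof has to live.

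For comparison, the paper runs \tref{t:Mon} only once, with $F=K_{15}$, on the complete multipartite graph with $V$ parts of size $g$ (which is trivially $F$-divisible and dense enough), collapses the parts to get a $(V,15,\cdot)$-design with repeated blocks, and then trades each chosen set of \emph{three} copies of a block of size $15$ for the $15$ blocks of size $7$ of a $(15,7,3)$-design on those points. This trade preserves pair coverage exactly, so no residual graph ever needs to be decomposed, and point-regularity is imposed at the end by developing all blocks cyclically. That $(15,7,3)$-trade is the key idea your proposal is missing; without it, your route requires solving two hard decomposition problems where the paper solves one.
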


\begin{proof}
Let $F$ be the complete graph on $15$ vertices and let $G$ be the
complete multipartite graph with $V$ parts, each of size $g$. Since
each vertex of $G$ has degree $(V-1)g$ which is divisible by $14$, the
degree of $F$ divides the degree of $G$.  Also $G$ has $g^2V(V-1)/2$
edges which is divisible by $105$ (the number of edges of $F$).  Thus
$G$ is $F$-divisible.  The chromatic number of $F$ is $15$. Then,
$\delta(G)/|G|=(V-1)/V> 1-1/1500$.  Thus by \tref{t:Mon}, for
sufficiently large $g$, there exists a decomposition of $G$ into $F$.

In such a decomposition, each vertex belongs to $g(V-1)/14$ copies of $K_{15}$. There are 
$V(V-1)g^2/210$ copies of $K_{15}$ altogether. 
Collapsing each partite set to a single vertex, and taking $30$ copies
of each block, we obtain a $(V,15,15g^2(V-1)/7)$-design $\cD$ with
$g^2V(V-1)/7=1802g^2V$ blocks of size $15$.

Next, by \cite[\S II.3.1]{HCD} (or by taking all the Fano planes
within the projective space on 15 points), there exists a
$(15,7,3)$-design.  Thus we can replace any set of three of the copies
of a block of size $15$ with $15$ blocks of size $7$.  In
particular, we replace $B_7/(15V)=4011570g^2$ sets of $3$ blocks of
size $15$ in $\cD$ to obtain $B_7/V$ blocks of size $7$.  Observe
there are $B_{15}/V$ blocks of size $15$ left.

Finally, to create a PBD which is point-regular, we assume the points
of $V$ are labelled by ${\mathbb Z}_{15}$ and take the cyclic shift of
each block under the additive action of this group on~$V$.
\end{proof}

\section{Discussion}

In considering sets of $\MOFS(n)$ one of the main research challenges has been
trying to establish what cardinalities maximal sets can have.  There
are few known constructions in the case when $n$ is odd. Our newfound
connections between \DK-designs and sets of  \MOFS\ provide general methods for
constructing maximal sets of \MOFS, including when $n$ is odd. We also
now have motivation to further develop the theory of \DK-designs.

In \tref{t:constZ} we showed that there are no interesting examples
of sets of \MOFS\ of pure type with constant $\Z$-sum. It remains open whether
there are examples of mixed type with constant $\Z$-sum.

  \let\oldthebibliography=\thebibliography
  \let\endoldthebibliography=\endthebibliography
  \renewenvironment{thebibliography}[1]{%
    \begin{oldthebibliography}{#1}%
      \setlength{\parskip}{0.4ex plus 0.1ex minus 0.1ex}%
      \setlength{\itemsep}{0.4ex plus 0.1ex minus 0.1ex}%
  }%
  {%
    \end{oldthebibliography}%
  }

\end{document}